\documentclass[12pt]{amsart}
\textheight22cm
\textwidth16cm
\oddsidemargin0.5cm
\evensidemargin0.5cm
\topmargin0cm

\theoremstyle{plain}
\newtheorem{theorem}{Theorem}[section]
\newtheorem{corollary}[theorem]{Corollary}
\newtheorem{example}[theorem]{Example}

\theoremstyle{definition}

\theoremstyle{remark}
\newtheorem{remark}[theorem]{Remark}
\newtheorem{problem}[theorem]{Problem}

\begin{document}

\title[Orthogonality-preserving and ($C^*$-)conformal module mappings]{Orthogonality-preserving, $C^*$-conformal and conformal module mappings on Hilbert $C^*$-modules}
\author[M.~Frank]{Michael Frank}
\address{Hochschule f\"ur Technik, Wirtschaft und Kultur (HTWK) Leipzig, Fachbereich
         Informatik, Mathematik und Naturwissenschaften, PF 301166, 04251 Leipzig, Germany}
\email{mfrank@imn.htwk-leipzig.de}
\thanks{The research has been supported by a grant of Deutsche Forschungsgemeinschaft (DFG) and by the RFBR-grant 07-01-91555.}
\author[A.~S.~Mishchenko]{Alexander S.~Mishchenko}
\address{Moscow State University, Faculty of Mechanics and Mathematics, Main Building,
         Leninskije Gory, 119899 Moscow, Russia}
\email{asmish@mech.mat.msu.su, asmish.math@gmail.com}
\author[A.~A.~Pavlov]{Alexander A.~Pavlov}
\address{Moscow State University, 119 922 Moscow, Russia, and
Universit\`a degli Studi di Trieste, I-34127 Trieste, Italy}
\email{axpavlov@mail.ru}
\keywords{$C^*$-algebras, Hilbert $C^*$-modules, orthogonality preserving mappings, conformal mappings, isometries}
\subjclass{Primary 46L08; Secondary 42C15, 42C40}

\begin{abstract}
We investigate orthonormality-preserving, $C^*$-conformal and conformal 
mo\-dule mappings on full Hilbert $C^*$-modules to obtain their general structure.
Orthogonality-preserving bounded module maps $T$ act as a multiplication
by an element $\lambda$ of the center of the multiplier algebra of the
$C^*$-algebra of coefficients combined with an isometric module operator
as long as some polar decomposition conditions for the specific element
$\lambda$ are fulfilled inside that multiplier algebra. Generally,
$T$ always fulfils the equality $\langle T(x),T(y) \rangle =
| \lambda |^2 \langle x,y \rangle$ for any elements $x,y$ of the
Hilbert $C^*$-module. At the contrary, $C^*$-conformal and conformal 
bounded module maps are shown to be only the positive real 
multiples of isometric module operators.
\end{abstract}
\maketitle

The set of all orthogonality-preserving bounded linear mappings on
Hilbert spaces is fairly easy to describe, and it coincides with the
set of all conformal linear mappings there: a linear map $T$ between
two Hilbert spaces $H_1$ and $H_2$ is orthogonality-preserving if and 
only if $T$ is the scalar multiple of an isometry $V$ with 
$V^*V={\rm id}_{H_1}$. Furthermore, the
set of all orthogonality-preserving mappings $\{ \lambda \cdot V:
\lambda \in {\mathbb C}, V^*V={\rm id}_{H_1} \}$ corresponds to 
the set of all those maps which transfer tight frames of $H_1$ 
into tight frames of (norm-closed) subspaces $V(H_1)$ of $H_2$, cf.~\cite{HL2000}.

The latter fact transfers to the more general situation of standard tight frames of
Hilbert $C^*$-modules in case the image submodule is an orthogonal
summand of the target Hilbert $C^*$-module, cf.~\cite[Prop.~5.10]{FL2002}.
Also, module isometries of Hilbert $C^*$-modules are always
induced by module unitary operators between them, \cite{L1994},
\cite[Prop.~2.3]{IT2008}.
However, in case of a non-trivial center of the multiplier algebra of 
the $C^*$-algebra of coefficients the property of a bounded module map to be
merely  orthogonality-preserving might not infer the property of that
map to be ($C^*$-)\-conformal or even isometric. So the goal of the present
note is to derive the structure of arbitrary orthogonality-preserving,
$C^*$-conformal or conformal bounded module mappings on Hilbert $C^*$-modules 
over (non-)unital $C^*$-algebras without any further assumption.

Partial solutions can be found in a publication by
D.~Ili{\v{s}}evi\'c and A.~Turn{\v{s}}ek for $C^*$-algebras
$A$ of coefficients which admit a faithful $*$-representation $\pi$
on some Hilbert space $H$ such that $K(H) \subseteq \pi(A) \subseteq
B(H)$, cf.~\cite[Thm.~3.1]{IT2008}.
Orthogonality-preserving mappings have been mentioned also in
a paper by J.~Chmieli\'nski, D.~Ili{\v{s}}evi\'c, M.~S.~Moslehian,
Gh.~Sadeghi, \cite[Th.~2.2]{ChIMS2008}.
In two working drafts \cite{LKW09-1,LKW09-2} by Chi-Wai Leung, Chi-Keung Ng
and Ngai-Ching Wong found by a Google search in May 2009 we obtained
further partial results on orthogonality-preserving linear
mappings on Hilbert $C^*$-modules.

Orthogonality-preserving bounded linear mappings between
$C^*$-algebras  have been considered by J.~Schweizer in his
Habilitation Thesis in 1996, \cite[Prop.~4.5-4.8]{Schw1996}.
His results are of interest in application to the
linking $C^*$-algebras of Hilbert $C^*$-modules.

A bounded module map $T$ on a Hilbert $C^*$-module $\mathcal M$ is
said to be {\it orthogonality-preserving} if $\langle T(x),T(y)
\rangle = 0$ in case $\langle x,y \rangle = 0$ for certain $x,y \in
\mathcal M$. In particular, for two Hilbert $C^*$-modules $\mathcal M$,
$\mathcal N$ over some $C^*$-algebra $A$ a bounded module map
$T: {\mathcal M} \to {\mathcal N}$ is orthogonality-preserving if and
only if the validity of the inequality $\langle x,x \rangle \leq
\langle x+ay,x+ay \rangle$ for some $x,y \in \mathcal M$ and any
$a \in A$ forces the validity of the inequality $\langle T(x),T(x)
\rangle \leq \langle T(x)+aT(y), T(x)+aT(y) \rangle$ for any $a \in A$,
cf.~\cite[Cor.~2.2]{IT2008}.
So the property of a bounded module map to be orthogonality
preserving has a geometrical meaning considering pairwise
orthogonal one-dimensional $C^*$-submodules and their orthogonality
in a geometric sense.

Orthogonality of elements of Hilbert $C^*$-modules with respect to
their $C^*$-valued inner products is different from the classical
James-Birkhoff orthogonality defined with respect to the norm derived
from the $C^*$-valued inner products, in general. Nevertheless, the
results are similar in both situations, and the roots of both these
problem fields coincide for the particular situation of Hilbert
spaces. For results in this parallel direction the reader might
consult publications by A.~Koldobsky \cite{Ko1993}, by
A.~Turn{\v{s}}ek \cite{Tu2005}, by J.~Chmieli\'nsky
\cite{Ch2005-2,Ch2007}, and by A.~Blanco and A.~Turn{\v{s}}ek
\cite{BlTu2006}, among others.

Further resorting to {\it $C^*$-conformal} or {\it conformal}
mappings on Hilbert $C^*$-modules, i.e. bounded module maps
preserving either a generalized $C^*$-valued angle $\langle x,y
\rangle / \|x\|\|y\|$ for any $x,y$ of the Hilbert $C^*$-module or
its normed value, we consider a particular situation of
orthogonality-preserving mappings. Surprisingly, both these sets
of orthogonality-preserving and of ($C^*$-)conformal mappings are
found to be different in case of a non-trivial center of the
multiplier algebra of the underlying $C^*$-algebra of
coefficients.

The content of the present paper is organized as follows:
In the following section we investigate the general structure of
orthogonality-preserving bounded module mappings on Hilbert
$C^*$-modules. The results are formulated in Theorem \ref{main}
and Theorem \ref{thm02}. In the last section we characterize
$C^*$-conformal and conformal bounded module mappings on Hilbert
$C^*$-modules, see Theorem \ref{teo:$C^*$-conformal_maps}
and Theorem \ref{teo:conformal_maps}.

Since we rely only on the very basics of $*$-representation
and duality theory of $C^*$-algebras and of Hilbert $C^*$-module
theory, respectively, we refer the reader to the monographs by
M.~Takesaki \cite{Takesaki} and by V.~M.~Manuilov and E.~V.~Troitsky
\cite{MaTrobook}, or to other relevant monographical publications
for basic facts and methods of both these theories.


\section{Orthogonality-preserving mappings}

\medskip
The set of all orthogonality-preserving bounded linear mappings on
Hilbert spaces is fairly easy to describe. For a given Hilbert
space $H$ it consists of all scalar multiples of isometries $V$,
where an isometry is a map $V: H \to H$ such that  $V^*V = {\rm
id}_H$. Any bounded linear orthogonality-preserving map $T$
induces a bounded linear map $T^*T: H \to H$. For a non-zero
element $x \in H$ set $T^*T(x)= \lambda_x x + z$ with $z \in
\{x\}^\bot$ and $\lambda_x \in {\mathbb C}$. Then the given relation
$\langle x,z \rangle = 0$ induces the equality
\[
   0 = \langle T(x),T(z)\rangle = \langle T^*T(x),z \rangle =
   \langle \lambda_x x + z, z \rangle = \langle z,z \rangle \, .
\]
Therefore, $z=0$ by the non-degeneratedness of the inner product,
and $\lambda_x \geq 0$ by the positivity of $T^*T$. Furthermore,
for two orthogonal elements $x,y \in H$ one has the equality
\[
   \lambda_{x+y} (x+y) = T^*T(x+y) = \lambda_x x + \lambda_y y
\]
which induces the equality $\lambda_{x+y} \langle x,x \rangle =
\lambda_x \langle x,x \rangle$ after scalar multiplication by $x
\in H$. Since the element $\langle x,x \rangle$ is invertible in
$\mathbb C$ we can conclude that the orthogonality-preserving
operator $T$ induces an operator $T^*T$ which acts as a positive
scalar multiple $\lambda \cdot {\rm id}_H$ of the identity operator on
any orthonormal basis of the Hilbert space $H$. So $T^*T = \lambda
\cdot {\rm id}_H$ on the Hilbert space $H$ by linear continuation. The
polar decomposition of $T$ inside the von Neumann algebra $B(H)$
of all bounded linear operators on $H$ gives us the equality $T =
\sqrt{\lambda} V$ for an isometry $V: H \to H$, i.e.~with
$V^*V={\rm id}_H$. The positive number $\sqrt{\lambda}$ can be
replaced by an arbitrary complex number of the same modulus
multiplying by a unitary $u \in {\mathbb C}$. In this case the
isometry $V$ has to be replaced by the isometry $u^*V$ to yield
another decomposition of $T$ in a more general form.

\medskip
As a natural generalization of the described situation one may
change the algebra of coefficients to arbitrary $C^*$-algebras $A$
and the Hilbert spaces to $C^*$-valued inner product $A$-modules,
the (pre-)Hilbert $C^*$-modules. Hilbert $C^*$-modules are an
often used tool in the study of locally compact quantum groups and
their representations, in noncommutative geometry, in $KK$-theory,
and in the study of completely positive maps between
$C^*$-algebras, among other research fields.

To be more precise, a (left) {\it pre-Hilbert $C^*$-module} over a
(not necessarily unital) $C^*$-algebra $A$ is a left $A$-module
$\mathcal M$ equipped with an $A$-valued inner product $\langle
\cdot , \cdot \rangle : {\mathcal M} \times {\mathcal M} \to A$,
which is $A$-linear in the first variable and has the properties
$\langle x,y \rangle=\langle y,x \rangle ^*$, $\langle x,x \rangle
\geq 0$ with equality if and only if $x=0$. We always suppose that
the linear structures of $A$ and $\mathcal M$ are compatible. A
pre-Hilbert $A$-module $\mathcal M$ is called a {\it Hilbert
$A$-module} if $\mathcal M$ is a Banach space with respect to the
norm $\| x \|= \| \langle x,x\rangle \|^{1/2}$.

Consider bounded module orthogonality-preserving maps $T$ on
Hilbert $C^*$-modules $\mathcal M$. For several reasons we cannot
repeat the simple arguments given for Hilbert spaces in the
situation of an arbitrary Hilbert $C^*$-module, in general. First
of all, the bounded module operator $T$ might not admit a bounded
module operator $T^*$ as its adjoint operator, i.e. satisfying the
equality $\langle T(x),y \rangle = \langle x,T^*(y) \rangle$ for
any $x,y \in \mathcal M$. Secondly, orthogonal complements of
subsets of a Hilbert $C^*$-module might not be orthogonal direct
summands of it. Last but not least, Hilbert $C^*$-modules might
not admit analogs (in a wide sense) of orthogonal bases. So the
understanding of the nature of bounded module
orthogonality-preserving operators on Hilbert $C^*$-modules
involves both more global and other kinds of localisation
arguments.

\begin{example} {\rm
Let $A$ be the $C^*$-algebra of continuous functions on the unit
interval $[0,1]$ equipped with the usual Borel topology. Let $I =
C_0((0,1])$ be the $C^*$-subalgebra of all continuous functions on
$[0,1]$ vanishing at zero. $I$ is a norm-closed two-sided ideal of
$A$.
\newline
Let ${\mathcal M}_1 = A \oplus A$ be the Hilbert $A$-module that
consists of two copies of $A$, equipped with the standard
$A$-valued inner product on it. Consider the multiplication $T_1$
of both parts of ${\mathcal M}_1$ by the function $a(t) \in A$, $a(t)
:= t$ for any $t \in [0,1]$. Obviously, the map $T_1$ is bounded,
$A$-linear, injective and orthogonality-preserving. However, its range
is even not norm-closed in ${\mathcal M}_1$. 
\newline
Let ${\mathcal M}_2 = I \oplus l_2(A)$ be the orthogonal direct
sum of a proper ideal $I$ of $A$ and of the standard countably generated Hilbert
$A$-module $l_2(A)$. Consider the shift operator $T_2: {\mathcal M}_2
\to {\mathcal M}_2$ defined by the formula $T_2((i,a_1,a_2,...))=
(0,i,a_1,a_2,...)$ for $a_k \in A$, $i \in I$. It is an isometric
$A$-linear embedding of ${\mathcal M}_2$ into itself and, hence,
orthogonality-preserving, however $T_2$ is not adjointable.
}
\end{example}

To formulate the result on orthogonality-preserving mappings
we need a construction by W.~L.~Paschke (\cite{Pa1973}):
for any Hilbert $A$-module $\mathcal M$ over any $C^*$-algebra
$A$ one can extend $\mathcal M$ canonically to a Hilbert
$A^{**}$-module ${\mathcal M}^{\#}$ over the bidual Banach space
and von Neumann algebra $A^{**}$ of $A$
\cite[Th.~3.2, Prop.~3.8, {\S }4]{Pa1973}. For this aim the
$A^{**}$-valued pre-inner product can be defined by the formula
\[
[a\otimes x,b\otimes y]=a\langle x,y\rangle b^*,
\]
for elementary tensors of $A^{**}\otimes M$, where $a,b\in A^{**}$,
$x,y \in M$. The quotient module of $A^{**}\otimes M$ by the
set of all isotropic vectors is denoted by ${\mathcal M}^{\#}$. It
can be canonically completed to a self-dual Hilbert $A^{**}$-module
$\mathcal N$ which is isometrically algebraically isomorphic to 
the $A^{**}$-dual $A^{**}$-module of ${\mathcal M}^{\#}$. 
$\mathcal N$ is a dual Banach space itself,
(cf.~\cite[Thm.~3.2, Prop.~3.8, {\S}4]{Pa1973}.)
Every $A$-linear bounded map $T: {\mathcal M} \to {\mathcal M}$ 
can be continued to a unique $A^{**}$-linear map $T: {\mathcal M}^{\#} 
\to {\mathcal M}^{\#}$ preserving the operator norm and obeying the 
canonical embedding $\pi'({\mathcal M})$ of
$\mathcal M$ into ${\mathcal M}^{\#}$. Similarly, $T$ can be
further extended to the self-dual Hilbert $A^{**}$-module $\mathcal N$.
The extension is such that the isometrically algebraically embedded
copy $\pi'({\mathcal M})$ of $\mathcal M$ in $\mathcal N$ is a
w*-dense $A$-submodule of $\mathcal N$, and that $A$-valued inner
product values of elements of $\mathcal M$ embedded in $\mathcal N$
are preserved with respect to the $A^{**}$-valued inner product on
$\mathcal N$ and to the canonical isometric embedding $\pi$ of $A$
into its bidual Banach space $A^{**}$. Any bounded $A$-linear
operator $T$ on $\mathcal M$ extends to a unique bounded
$A^{**}$-linear operator on $\mathcal N$ preserving the operator
norm, cf. \cite[Prop.~3.6, Cor.~3.7, {\S}4]{Pa1973}. The extension
of bounded $A$-linear operators from $\mathcal M$ to $\mathcal N$
is continuous with respect to the w*-topology on ${\mathcal N}$.
For topological characterizations of self-duality of Hilbert
$C^*$-modules over $W^*$-algebras we refer to \cite{Pa1973}, 
\cite[Thm.~3.2]{Frank1990} and to \cite{Schw1996,Schw2002}: a
Hilbert $C^*$-module $\mathcal K$ over a $W^*$-algebra $B$ is self-dual, 
if and only if its unit ball is complete with respect to the topology 
induced by the semi-norms $\{ |f(\langle .,x \rangle)| : x \in 
{\mathcal K}, f \in B^*, \|x\| \leq 1, \|f\| \leq 1 \}$, 
if and only if its unit ball is complete with respect to the topology 
induced by the semi-norms $\{ f(\langle .,. \rangle)^{1/2} : 
f \in B^*, \|x\| \leq 1, \|f\| \leq 1\}$. The first topology coincides 
with the $w^*$-topology on $\mathcal K$ in that case. 

Note, that in the construction above $\mathcal M$ is always $w^*$-dense
in $\mathcal N$, as well as for any subset of $\mathcal M$ the
respective construction is $w^*$-dense in its biorthogonal
complement with respect to $\mathcal N$. However, starting
with a subset of $\mathcal N$ its biorthogonal complement
with respect to $\mathcal N$ might not have a $w^*$-dense 
intersection with the embedding of $\mathcal M$ into $\mathcal N$,
cf. \cite[Prop. 3.11.9]{Ped1979}.

\begin{example}
{\rm
Let $A$ be the $C^*$-algebra of all continuous functions on the
unit interval, i.e. $A=C([0,1])$. In case we consider $A$ as a Hilbert
$C^*$-module over itself and an orthogonality-preserving map
$T_0$ defined by the multiplication by the function $a(t) =
t \cdot (\sin(1/t) + {\bf i}\cos(1/t))$ we obtain that
the operator $T_0$ cannot be written as the combination of
a multiplication by a positive element of $A$ and of an
isometric module operator $U_0$ on ${\mathcal M}=A$. The reason
for this phenomenon is the lack of a polar decomposition
of $a(t)$ inside $A$. Only a lift to the bidual von
Neumann algebra $A^{**}$ of $A$ restores the simple
description of the continued operator $T_0$ as the combination
of a multiplication by a positive element (of the center) of $A$
and an isometric module operator on ${\mathcal M}^{\#}=
{\mathcal N}=A^{**}$. The unitary part of $a(t)$ is a so-called
local multiplier of $C([0,1])$, i.e. a multiplier of $C_0((0,1])$.
But it is not a multiplier of $C([0,1])$ itself. We shall show that
this example is a very canonical one.
}
\end{example}

We are going to demonstrate the following fact on the nature of
orthogonality-preserving bounded module mappings on
Hilbert $C^*$-modules.
Without loss of generality, one may assume that the range of the
$A$-valued inner product on $\mathcal M$ in $A$ is norm-dense in
$A$. Such Hilbert $C^*$-modules are called {\it full} Hilbert
$C^*$-modules.
Otherwise $A$ has to be replaced by the range of the $A$-valued
inner product which is always a two-sided norm-closed $*$-ideal
of $A$. The sets of all adjointable bounded module operators and of
all bounded module operators on ${\mathcal M}$, resp., are invariant
with respect to such changes of sets of coefficients of Hilbert
$C^*$-modules, cf.~\cite{Pa1973}.

\begin{theorem} \label{main}
   Let $A$ be a $C^*$-algebra, $\mathcal M$ be a full Hilbert $A$-module and
   ${\mathcal M}^{\#}$ be its canonical $A^{**}$-extension.
   Any orthogonality-preserving bounded $A$-linear operator $T$ on
   $\mathcal M$ is of the form $T = \lambda V$, where $V: {\mathcal M}^{\#}
   \to {\mathcal M}^{\#}$ is an isometric $A$-linear embedding and
   $\lambda$ is a positive element of the centre $Z(M(A))$ of the
   multiplier algebra $M(A)$ of $A$.
   \newline
   If any element $\lambda' \in Z(M(A))$ with $|\lambda'|=\lambda$
   admits a polar decomposition inside $Z(M(A))$ then the operator $V$
   preserves $\pi'({\mathcal M}) \subset {\mathcal M}^{\#}$. So $T =
   \lambda \cdot V$ on ${\mathcal M}$.
\end{theorem}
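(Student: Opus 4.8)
The plan is to follow the Hilbert-space argument sketched above, but to repair its three defects (no adjoints, no splitting of orthogonal complements, no orthonormal systems) in one stroke by passing to the self-dual $A^{**}$-completion $\mathcal N$ of $\mathcal M^{\#}$. First I would extend $T$ to the bounded $A^{**}$-linear operator on $\mathcal N$ supplied by Paschke's construction recalled above. Since $\mathcal N$ is self-dual over the von Neumann algebra $A^{**}$, this extension is adjointable, so I may set $B := T^*T$, a positive element of the von Neumann algebra $\mathcal A := \mathrm{End}^*_{A^{**}}(\mathcal N)$, satisfying $\langle Bx,y\rangle = \langle Tx,Ty\rangle$ for all $x,y \in \mathcal N$. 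For $x,y \in \pi'(\mathcal M)$ the orthogonality-preserving hypothesis reads $\langle x,y\rangle = 0 \Rightarrow \langle Bx,y\rangle = 0$, and I would then propagate this implication to all of $\mathcal N$ using the w*-density of $\pi'(\mathcal M)$ and the w*-continuity of $T$, of $T^*$ and of the inner product. Checking that the implication itself survives these limits, and not merely the numerical identities, is the first delicate point.

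The heart of the argument is a commutant computation. In a self-dual Hilbert $A^{**}$-module every w*-closed submodule is orthogonally complemented, so each projection $P \in \mathcal A$ gives a decomposition $\mathcal N = K \oplus K^{\perp}$ with $K = (K^{\perp})^{\perp}$. For $x \in K$ and $y \in K^{\perp}$ one has $\langle x,y\rangle = 0$, hence $\langle Bx,y\rangle = 0$, i.e. $Bx \perp K^{\perp}$ and so $Bx \in K$; the symmetric argument gives $B(K^{\perp}) \subseteq K^{\perp}$. Thus $B$ leaves $K$ and $K^{\perp}$ invariant and commutes with $P$. Since the projections generate $\mathcal A$, the positive operator $B$ lies in the centre $Z(\mathcal A)$, which for a full self-dual module is exactly the algebra of multiplications by elements of $Z(A^{**})$. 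Hence $B$ is multiplication by a positive central $c \in Z(A^{**})$, and $\langle Tx,Ty\rangle = c\langle x,y\rangle$ for all $x,y$; in particular the announced identity $\langle T(x),T(y)\rangle = |\lambda|^2\langle x,y\rangle$ holds with $|\lambda|^2 = c$.

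Next I would descend from $A^{**}$ to $M(A)$. For $x,y \in \mathcal M$ the value $c\langle x,y\rangle = \langle Tx,Ty\rangle$ lies in $A$, and fullness makes the inner products $\langle x,y\rangle$ norm-dense in $A$; therefore $cA \subseteq A$, and since $c$ is central also $Ac \subseteq A$, so $c \in Z(M(A))_{+}$. Setting $\lambda := c^{1/2}$, computed by functional calculus inside the $C^*$-subalgebra $Z(M(A))$, gives a positive $\lambda \in Z(M(A))$ with $|\lambda|^2 = c$. The polar decomposition of $T$ inside $\mathcal A$ now yields $T = V|T| = V\lambda = \lambda V$, where $|T| = c^{1/2} = \lambda$ is central, hence commutes with $V$, and $V$ is a partial isometry whose initial projection is the support of $\lambda$. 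Extending $V$ by an isometry on $\ker\lambda$, which is immaterial for the identity $T = \lambda V$, produces the isometric $A$-linear embedding $V$ of $\mathcal M^{\#}$ required by the first assertion.

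Finally, for the second assertion I must show that the polar-decomposition hypothesis forces $V$ to preserve $\pi'(\mathcal M)$. The obstruction is precisely the local-multiplier phenomenon of the Example: the phase of $T$ need not be a multiplier of $A$, only of the ideal generated by $\lambda$, in which case $V$ carries $\pi'(\mathcal M)$ out of itself. Under the hypothesis that every $\lambda' \in Z(M(A))$ with $|\lambda'| = \lambda$ admits a polar decomposition $\lambda' = u\lambda$ with unitary phase $u \in Z(M(A))$, this phase becomes genuinely available inside $M(A)$; I would use it to write $V$ as multiplication by such a central unitary composed with an $A$-linear isometry that manifestly preserves $\pi'(\mathcal M)$, whence $V(\pi'(\mathcal M)) \subseteq \pi'(\mathcal M)$ and $T = \lambda V$ already on $\mathcal M$. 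Making this absorption of the phase precise, and checking that it reproduces the map $T$ on $\mathcal M$ rather than merely on $\mathcal N$, is the main obstacle of the proof.
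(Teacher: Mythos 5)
Your core idea for the first assertion is sound and genuinely different from the paper's route. Where the paper reaches the centrality of the ``modulus'' of $T$ by projecting onto the discrete part of $A^{**}$ via Akemann's theorem, invoking the Ili\v{s}evi\'c--Turn\v{s}ek result on each atomic central fibre $q_\alpha p\mathcal{N}$, and then reassembling $\lambda_p$ as the w*-limit of an increasing bounded net of finite sums, you obtain it in one stroke: $B=T^*T$ leaves every complemented pair $K\oplus K^{\perp}$ invariant, hence commutes with every projection of ${\rm End}_{A^{**}}(\mathcal{N})$, hence lies in its centre, which by w*-fullness is exactly the algebra of multiplications by elements of $Z(A^{**})$ (the identification $Z(A^{**})\equiv Z({\rm End}_{A^{**}}(\mathcal{N}))$ that the paper itself uses). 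This avoids both external ingredients and is arguably tighter. The descent $c\in Z(M(A))$ via fullness, the central square root, and the completion of the polar-decomposition partial isometry to an isometry on the kernel of $\lambda$ all go through; for the last point note that ${\rm Im}(T)\subseteq q\mathcal{N}$ for $q$ the support projection of $c$, so $V+(1-q)\cdot{\rm id}$ works.

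Two points you flag but do not execute are genuine obligations, not formalities. First, the commutant computation needs the orthogonality-preserving property for pairs $Px$, $(1-P)y$ with $P$ an arbitrary projection of ${\rm End}_{A^{**}}(\mathcal{N})$, and such elements generally lie outside $\pi'(\mathcal{M})$ even when $x,y\in\pi'(\mathcal{M})$; w*-density plus w*-continuity alone does not transport an implication whose hypothesis is an exact equality $\langle x,y\rangle=0$. The paper's device is to pass from a pair $x\perp y$ in $\mathcal{N}$ to the mutually orthogonal complemented submodules $\{x\}^{\perp\perp}$ and $\{y\}^{\perp\perp}$ and to use the asserted w*-density of their intersections with $\pi'(\mathcal{M})$; you need some version of this step before $B$ can be tested against arbitrary projections. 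Second, the polar-decomposition clause --- that $V$ preserves $\pi'(\mathcal{M})$ when every $\lambda'$ with $|\lambda'|=\lambda$ admits a polar decomposition in $Z(M(A))$ --- is the entire second assertion of the theorem, and your closing paragraph about absorbing the phase is a statement of intent rather than an argument (to be fair, the paper's own treatment of this step is also only a sketch). With these two steps supplied, your proof is complete and more self-contained than the original.
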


In \cite[Thm.~3.1]{IT2008} D.~Ili{\v{s}}evi\'c and
A.~Turn{\v{s}}ek proved Theorem \ref{main} for the particular case
if for some Hilbert space $H$ the $C^*$-algebra $A$ admits an
isometric representation $\pi$ on $H$ with the property $K(H)
\subset \pi(A) \subset B(H)$. In this situation $Z(M(A)) = 
{\mathbb C}$.

\begin{proof}
We want to make use of the canonical nondegenerate isometric
$*$-representation $\pi$ of a $C^*$-algebra $A$ in its bidual
Banach space and von Neumann algebra $A^{**}$ of $A$, as well
as of its extension $\pi': {\mathcal M} \to {\mathcal M}^{\#}
\to {\mathcal N}$ and of its operator extension. That is, we
switch from the triple $\{ A,{\mathcal M}, T \}$ to the triple
$\{ A^{**},{\mathcal M}^{\#} \subseteq {\mathcal N}, T \}$.

We have to demonstrate that for orthogonality-preserving bounded
$A$-linear mappings $T$ on $\mathcal M$ the respective extended
bounded $A^{**}$-linear operator on $\mathcal N$ is still
orthogonality-preserving for $\mathcal N$. 
Let $x$ be an element of $\mathcal N$ and denote by $\mathcal K$ 
its biorthogonal complement of with respect to $\mathcal N$. 
Then $\mathcal K$ is a direct orthogonal summand of $\mathcal N$ 
because $\mathcal N$ and $\mathcal K$ are self-dual Hilbert 
$A^{**}$-modules. Consider any positive normal state $f$ on $A^{**}$ 
with $f(\langle x,x \rangle) \not= 0$. Since the $A$-valued inner 
product $\langle .,. \rangle$ on $\mathcal M$ continues to an 
$A^{**}$-valued inner product $\langle .,. \rangle$ on $\mathcal N$ 
in a unique way by \cite[Thm.~3.2]{Pa1973}, the possibly degenerated 
complex-valued inner product $f(\langle .,. \rangle)$ on $\mathcal M$ 
continues to a possibly degenerated complex-valued inner product 
$f(\langle .,. \rangle)$ on $\mathcal N$ in a unique way. Consider 
$x \in \mathcal N$ and its module-biorthogonal complement $\mathcal K$ 
with respect to $\mathcal N$. The intersection of $\mathcal K$ with 
the isometrically embedded copy of $\mathcal M$ in $\mathcal N$ has to 
be a weakly-dense subset of $\mathcal K$ after factorization by the 
kernel of $f(\langle .,. \rangle)^{1/2}$, otherwise the continuation 
of $f(\langle .,. \rangle)$ from ${\mathcal M} \cap {\mathcal K}$ 
to $\mathcal K$ would be non-unique. So $x$ can be represented as 
a weak limit of a Hilbert space sequence of the subset $({\mathcal K} 
\cap {\mathcal M}) / kernel(f(\langle .,.\rangle)^{1/2})$ in
${\mathcal N} / kernel(f(\langle .,.\rangle)^{1/2})$. 
Now, take another non-trivial element $y \in {\mathcal N}$ with 
$\langle x,y \rangle=0$. Then the module-biorthogonal complement 
${\mathcal L}$ of $y$ with respect to $\mathcal N$ is orthogonal to 
${\mathcal K}$. Repeat the construction for $y$ fixing $f$. 
Since $f(\langle z,t \rangle)=0$ for any $z \in ({\mathcal K} \cap 
{\mathcal M}) / kernel(f(\langle .,.\rangle)^{1/2})$ and any $t \in 
({\mathcal L} \cap {\mathcal M}) / kernel(f(\langle .,.\rangle)^{1/2})$, 
and since these sets are weakly dense in ${\mathcal K}/kernel(f(\langle .,.
\rangle)^{1/2})$ and ${\mathcal L}/kernel(f(\langle .,.\rangle)^{1/2})$, 
respectively, the weak continuity of the map $T$ and the jointly
weak continuity of inner products forces $f(\langle T(z),T(t) \rangle)=0$. 
Since $f$ has been selected arbitrarily, $\langle x,y \rangle=0$ 
for some $x,y \in \mathcal N$ forces $\langle T(x),T(y) \rangle=0$. 
Note, that the arguments are so complicated because ${\mathcal K}$
or ${\mathcal L}$ might have non-$w°*$-dense intersections 
with ${\mathcal M} \subseteq {\mathcal N}$ by 
\cite[Prop.~3.11.9]{Ped1979}.

Next, we want to consider only {\it discrete} W*-algebras,
i.e.~W*-algebras for which the supremum of all minimal projections
contained in them equals their identity. (We prefer to use the word
discrete instead of atomic.) To connect to the general $C^*$-case
we make use of a theorem by Ch.~A.~Akemann stating that the
$*$-homomorphism of a $C^*$-algebra $A$ into the discrete part of
its bidual von Neumann algebra $A^{**}$ which arises as the
composition of the canonical embedding $\pi$ of $A$ into $A^{**}$
followed by the projection $\rho$ to the discrete part of $A^{**}$
is an injective $*$-homomorphism, \cite[p.~278]{Ak1969} and
\cite[p.~I]{Ak1971}. The injective $*$-homomorphism $\rho$ is partially
implemented by a central projection $p \in Z(A^{**})$ in such a
way that $A^{**}$ multiplied by $p$ gives the discrete part of
$A^{**}$. Applying this approach to our situation we reduce the
problem further by investigating the triple $\{ pA^{**},
p{\mathcal N}, pT \}$ instead of the triple $\{ A^{**}, {\mathcal
N}, T \}$, where we rely on the injectivity of the algebraic
embeddings $\rho \circ \pi: A \to pA^{**}$ and $\rho' \circ \pi':
{\mathcal M} \to p{\mathcal N}$.
The latter map is injective since $\langle x,x \rangle \not= 0$
forces $\langle px,px \rangle = p\langle x,x \rangle =
\rho \circ \pi (\langle x,x \rangle) \not= 0$.
Obviously, the bounded $pA^{**}$-linear operator $pT$ is
orthogonality-preserving for the self-dual Hilbert
$pA^{**}$-module $p{\mathcal N}$ because the orthogonal
projection of $\mathcal N$ onto $p{\mathcal N}$ and the
operator $T$ commute, and both they are orthogonality-preserving.

In the sequel we have to consider the multiplier algebra $M(A)$
and the left multiplier algebra $LM(A)$ of the $C^*$-algebra $A$.
By \cite{Ped1979} every non-degenerate injective $*$-representation
of $A$ in a von Neumann algebra $B$ extends to an injective
$*$-representation of the multiplier algebra $M(A)$ in $B$ and
to an isometric algebraic representation of the left multiplier
algebra $LM(A)$ of $A$ preserving the strict and the left strict
topologies on $M(A)$ and on $LM(A)$, respectively. In particular,
the injective $*$-representation $\rho \circ \phi$ extends to $M(A)$
and to $LM(A)$ in such a way that
\begin{eqnarray*}
    \rho \circ \phi(M(A)) & = & \{ b \in pA^{**}: b \rho \circ
    \phi(a) \in A, \, \rho \circ \phi (a) b \in A \,\, {\rm for}
    \, {\rm every} \,\, a \in A \} \, , \\
    \rho \circ \phi(LM(A)) & = & \{ b \in pA^{**}: b \rho \circ
    \phi(a) \in A \,\, {\rm for} \, {\rm every} \,\, a \in A \} \, .
\end{eqnarray*}
Since $Z(LM(A)) = Z(M(A))$ for the multiplier algebra
of $A$ of every $C^*$-algebra $A$, we have the description
\begin{eqnarray*}
    \rho \circ \phi(Z(M(A))) & = & \{ b \in pA^{**}: b \rho \circ
    \phi(a) = \rho \circ \phi (a) b \in A \,\, {\rm for}
    \, {\rm every} \,\, a \in A \} \, .
\end{eqnarray*}

Since the von Neumann algebra $pA^{**}$ is discrete the identity
$p$ can be represented as the w*-sum of a maximal set of pairwise
orthogonal atomic projections $\{ q_\alpha : \alpha \in I\}$ of
the centre $Z(pA^{**})$ of $pA^{**}$. Note, that $\sum_{\alpha
\in I} q_\alpha = p$. Select a single atomic projection $q_\alpha
\in Z(pA^{**})$ of this collection and consider the part $\{
q_\alpha  pA^{**}, q_\alpha p{\mathcal N}, q_\alpha pT \}$ of the
problem for every single $\alpha \in I$.

By \cite[Thm.~3.1]{IT2008} the operator $q_\alpha pT$ can be
described as a non-negative constant $\lambda_{q_\alpha}$ multiplied by
an isometry $V_{q_\alpha}$ on the Hilbert $q_\alpha pA^{**}$-module
$q_\alpha p{\mathcal N}$, where the isometry $V_{q_\alpha}$ preserves
the $q_\alpha pA$-submodule $q_\alpha p{\mathcal M}$ inside
$q_\alpha p{\mathcal N}$ since the operator $q_\alpha pT$ preserves
it, and multiplication by a positive number does not change this fact.
In case $\lambda_{q_\alpha}=0$ we set simply $V_{q_\alpha}=0$.

We have to show the existence of global operators on the Hilbert
$pA^{**}$-module $p{\mathcal N}$ build as w*-limits of nets of
finite sums with pairwise distinct summands of the sets
$\{ \lambda_{q_\alpha} q_\alpha : \alpha \in I\}$ and
$\{ q_\alpha V_{q_\alpha} : \alpha \in I \}$, respectively.
Additionally, we have to establish key properties of them. First,
note that the collection of all finite sums with pairwise distinct
summands of  $\{ \lambda_{q_\alpha} q_\alpha : \alpha \in I \}$
form an increasingly directed net of positive elements of the centre
of the operator algebra ${\rm End}_{pA^{**}}(p{\mathcal N})$, which is
$*$-isomorphic to the von Neumann algebra $Z(pA^{**})$. This net is
bounded by $\|pT\| \cdot {\rm id}_{p{\mathcal N}}$ since the operator
$pT$ admits an adjoint operator on the self-dual Hilbert $pA^{**}$-module
$p{\mathcal N}$ by \cite[Prop. 3.4]{Pa1973} and since for any finite
subset $I_0$ of $I$ the inequality
\[
 0 \leq \sum_{\alpha \in I_0} \lambda_{q_\alpha}^2 \cdot
        {\rm id}_{q_\alpha p {\mathcal N}}
     =  \sum_{\alpha \in I_0} q_\alpha pT^*T
   \leq pT^*T
   \leq \|pT\|^2 \cdot {\rm id}_{p{\mathcal N}}
\]
holds in the operator algebra ${\rm End}_{pA^{**}}(p{\mathcal N})$,
the centre of which is $*$-isomorphic to $Z(pA^{**})$.
Therefore, the supremum of this increasingly directed bounded net of
positive elements exists as an element of the centre
of the operator algebra ${\rm End}_{pA^{**}}(p{\mathcal N})$, which is
$*$-isomorphic to the von Neumann algebra $Z(pA^{**})$. We denote
the supremum of this net by $\lambda_p$. By construction and by the
w*-continuity of transfers to suprema of increasingly directed bounded
nets of self-adjoint elements of von Neumann algebras we have the
equality
\[
   \lambda_p = {\rm w^*-lim}_{I_0 \subseteq I} \sum_{\alpha \in I_0}
      \lambda_{q_\alpha} \cdot q_\alpha \in Z(pA^{**})
      \equiv Z({\rm End}_{pA^{**}}(p{\mathcal N}))
\]
where $I_0$ runs over the partially ordered net of all finite subsets
of $I$.
Since $\langle q_\alpha pT^*T(z), z \rangle = \lambda_{q_\alpha}^2
q_\alpha \langle z,z \rangle$ for any $z \in q_\alpha {\mathcal N}$
and for any $\alpha \in I$, we arrive at the equality
\[
   \langle pT^*T(z), z \rangle = \lambda_p^2 \cdot p \langle z,z \rangle
\]
for any $z \in p{\mathcal N}$ and for the constructed positive
$\lambda_p \in  Z(pA^{**}) \equiv Z({\rm End}_{pA^{**}}(p{\mathcal N}))$.
Consequently, the operator $pT$ can be written as $pT = \lambda_p V_p$
for some isometric $pA^{**}$-linear map $V_p \in {\rm End}_{pA^{**}}
(p{\mathcal N})$, cf.~\cite[Proposition 2.3]{IT2008}.

Consider the operator $pT$ on $p{\mathcal N}$. Since the formula
\begin{equation} \label{equal999}
   \langle pT(x),pT(x) \rangle = \lambda_p^2 \langle x,x \rangle
   \in \rho \circ \pi(A)
\end{equation}
holds for any $x \in \rho'\circ\pi'({\mathcal M}) \subseteq
p{\mathcal N}$ and since the range of the $A$-valued inner product
on $\mathcal M$ is supposed to be the entire $C^*$-algebra $A$, the
right side of this equality and  the multiplier theory of
$C^*$-algebras forces $\lambda_p^2 \in LM(pA) \cap Z(pA^{**}) =
Z(M(\rho \circ \pi(A))) = \rho \circ \pi (Z(M(A)))$, \cite{Ped1979}.
Taking the square root of $\lambda_p^2$ in a $C^*$-algebraical sense
is an operation which results in a (unique) positive element of the
$C^*$-algebra itself. So we arrive at $\lambda_p \in \rho \circ \pi
(Z(M(A)))$ as the square root of $\lambda_p^2 \geq 0$. In particular,
the operator $\lambda_p \cdot {\rm id}_{p{\mathcal N}}$ preserves the
$\rho \circ \pi(A)$-submodule $\rho' \circ \pi'({\mathcal M})$.

As a consequence, we can lift the bounded $pA^{**}$-linear
orthogonality-preserving operator $pT$ on $p{\mathcal N}$ back to
${\mathcal M}^{\#}$ since $A^{**}$ allows polar decomposition for any
element, the embedding $\rho \circ \pi: A \to pA^{**}$
and the module and operator mappings, induced by $\rho \circ \pi$ and
by Paschke's embedding were isometrically and algebraically, just by
multiplying with or, resp., acting by $p$ in the second step.
So we obtain a decomposition $T = \lambda V$ of $T \in {\rm End}_A
({\mathcal M})$ with a positive function $\lambda \in Z(M(A))
\equiv Z({\rm End}_A({\mathcal M}))$ derived
from $\lambda_p$, and with an isometric $A$-linear embedding $V \in
{\rm End}_A({\mathcal M}^{\#})$, $V$ derived from $V_p$.

In case any element $\lambda' \in Z(M(A)))$ with $|\lambda'|=\lambda$
admits a polar decomposition inside $Z(M(A)))$ then the operator $V$
preserves $\pi'({\mathcal M}) \subset {\mathcal M}^{\#}$. So $T =
\lambda \cdot V$ on ${\mathcal M}$.

For completeness just note, that the  adjointability of $V$ goes
lost on this last step of the proof in case $T$ has not been
adjointable on ${\mathcal M}$ in the very beginning.
\end{proof}

\begin{theorem} \label{thm02}
   Let $A$ be a $C^*$-algebra and $\mathcal M$ be a Hilbert $A$-module.
   Any orthogonality-preserving bounded $A$-linear operator $T$ on
   $\mathcal M$ fulfils the equality
   \[
      \langle T(x),T(y) \rangle = \kappa \langle x,y \rangle
   \]
   for a certain $T$-specific positive element $\kappa \in Z(M(A))$
   and for any $x,y \in \mathcal M$.
\end{theorem}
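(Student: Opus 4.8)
The plan is to read the identity off directly from the structural description already obtained in Theorem \ref{main}, which yields the diagonal case at once, and then to pass to the off-diagonal case either by the same bilinear computation or by polarisation. First I would dispose of the fullness hypothesis: if $\mathcal M$ is not full, I replace $A$ by the closed two-sided ideal $J = \overline{\langle \mathcal M,\mathcal M\rangle}$ generated by all inner-product values, exactly as explained before Theorem \ref{main}. Then $\mathcal M$ is a full Hilbert $J$-module, the set of bounded module operators is unchanged, and the element $\kappa$ will live in $Z(M(J))$, which is what $Z(M(A))$ denotes under this standing convention. Hence I may assume $\mathcal M$ full and apply Theorem \ref{main}.

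Next I would invoke the first assertion of Theorem \ref{main}: $T=\lambda V$, where $\lambda\in Z(M(A))$ is positive and $V:\mathcal M^{\#}\to\mathcal M^{\#}$ is an isometric $A$-linear embedding, so that $\langle V(\xi),V(\eta)\rangle=\langle\xi,\eta\rangle$ for all $\xi,\eta\in\mathcal M^{\#}$. The crucial observation is that I need $V$ only as a map on $\mathcal M^{\#}$, evaluated at elements of the embedded copy $\pi'(\mathcal M)$; I never need $V$ to \emph{preserve} $\pi'(\mathcal M)$, so the polar-decomposition condition appearing in the second part of Theorem \ref{main} is irrelevant here. This is precisely why Theorem \ref{thm02} carries no extra hypothesis. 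Since the embedding $\pi'$ preserves inner-product values, for $x,y\in\mathcal M$ one has $\langle V(x),V(y)\rangle=\langle x,y\rangle\in A$. Setting $\kappa:=\lambda^{2}=\lambda\lambda^{*}\in Z(M(A))$, which is positive, I then compute
\[
   \langle T(x),T(y)\rangle=\langle\lambda V(x),\lambda V(y)\rangle
   =\lambda\,\langle V(x),V(y)\rangle\,\lambda^{*}
   =\lambda\,\langle x,y\rangle\,\lambda^{*}=\lambda\lambda^{*}\langle x,y\rangle
   =\kappa\,\langle x,y\rangle ,
\]
where the second equality uses $A$-linearity in the first and conjugate-$A$-linearity in the second variable, and the fourth uses that $\lambda^{*}$ is central and hence commutes with $\langle x,y\rangle\in A$. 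This is the asserted identity.

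The only points needing care are bookkeeping ones, and I expect the main (minor) obstacle to be making this last computation rigorous across the chain of embeddings $\mathcal M\hookrightarrow\mathcal M^{\#}\subseteq\mathcal N$: one must check that $\lambda,\lambda^{*}$, a priori elements of $Z(M(A))\subseteq Z(A^{**})$ acting on $\mathcal M^{\#}$, really commute with the $A$-valued inner-product values, and that $\lambda V(x)=T(x)$ again lies in $\mathcal M$, so that every term is $A$-valued and the equality takes place in $A$ (indeed in $J$).

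Alternatively, and perhaps more transparently, I could avoid referring to $V$ altogether. Taking $y=x$ in the computation above gives the diagonal identity $\langle T(x),T(x)\rangle=\kappa\,\langle x,x\rangle$ for every $x\in\mathcal M$, which is just the lift of \eqref{equal999} back to $\mathcal M$. Then the polarisation identity $4\langle x,y\rangle=\sum_{k=0}^{3}i^{k}\langle x+i^{k}y,\,x+i^{k}y\rangle$, combined with the $\mathbb C$-linearity of $T$ (so that $T(x+i^{k}y)=T(x)+i^{k}T(y)$) and with the fact that the single central element $\kappa$ factors out of each summand, yields $\langle T(x),T(y)\rangle=\kappa\,\langle x,y\rangle$ at once. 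Either route delivers the theorem with the same positive, $T$-specific $\kappa=\lambda^{2}=|\lambda|^{2}\in Z(M(A))$.
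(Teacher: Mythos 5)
Your proposal is correct and takes essentially the same route as the paper: the paper's own (one-line) proof just reads the identity off from formula (\ref{equal999}) in the proof of Theorem \ref{main} --- that is, from the decomposition $T=\lambda V$ with $V$ inner-product preserving --- together with the observation that the canonical embedding of $\mathcal M$ into ${\mathcal M}^{\#}$ preserves inner-product values, which is exactly your main computation. Your additional remarks (reduction to the full case, the irrelevance of the polar-decomposition hypothesis, and the polarisation alternative) are sound elaborations of the same argument rather than a different approach.
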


\begin{proof}
We have only to remark that the values of the $A$-valued inner product
on $\mathcal M$ do not change if $\mathcal M$ is canonically embedded
into ${\mathcal M}^{\#}$ or $\mathcal N$. Then the obtained formula works in the bidual
situation, cf.~(\ref{equal999}).
\end{proof}

\begin{problem}
{\rm We conjecture that any orthogonality-preserving map $T$ on
Hilbert $A$-modules $\mathcal M$ over $C^*$-algebras $A$ are of the
form $T = \lambda V$ for some element $\lambda \in Z((M(A))$ and some
$A$-linear isometry $V: {\mathcal M} \to {\mathcal M}$. To solve this
problem one has possibly to solve the problem of general polar
decomposition of arbitrary elements of (commutative) $C^*$-algebras 
inside corresponding local multiplier algebras or in similarly 
derived algebras.}
\end{problem}

\begin{corollary}
   Let $A$ be a $C^*$-algebra and $\mathcal M$ be a Hilbert $A$-module.
   Let $T$ be an orthogonality-preserving bounded $A$-linear operator on
   $\mathcal M$ of the form $T = \lambda V$, where $V: {\mathcal M}
   \to {\mathcal M}$ is an isometric adjointable bounded $A$-linear
   embedding and $\lambda$ is an element of the centre $Z(M(A))$ of the
   multiplier algebra $M(A)$ of $A$. Then the following conditions
   are equivalent:
   \newcounter{cou001}
   \begin{list}{(\roman{cou001})}{\usecounter{cou001}}
   \item  $T$ is adjointable.
   \item  $V$ is adjointable.
   \item  The graph of the isometric embedding $V$ is a direct
          orthogonal summand of the Hilbert $A$-module
          ${\mathcal M} \oplus {\mathcal M}$.
   \item  The range ${\rm Im}(V)$ of $V$ is a direct orthogonal summand
          of $\mathcal M$.
   \end{list}
\end{corollary}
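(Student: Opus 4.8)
The plan is to deduce all four statements from a single elementary fact about isometric module embeddings, which I would prove first: for any isometric $A$-linear embedding $W\colon\mathcal M\to\mathcal N$ of Hilbert $A$-modules, $W$ is adjointable if and only if its range ${\rm Im}(W)$ is an orthogonal direct summand of $\mathcal N$. The starting observation is that an isometric $W$ satisfies $\|Wx\|=\|x\|$, hence is bounded below, so ${\rm Im}(W)$ is automatically norm-closed and $W$ is a topological isomorphism onto ${\rm Im}(W)$ with inverse of norm one. If $W$ is adjointable, the isometry identity together with non-degeneracy of the inner product forces $W^*W={\rm id}_{\mathcal M}$, so $WW^*$ is a self-adjoint idempotent in ${\rm End}_A(\mathcal N)$ whose range equals ${\rm Im}(W)$ (indeed $Wx=WW^*Wx$); thus ${\rm Im}(W)$ is complemented by ${\rm Im}({\rm id}-WW^*)$. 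Conversely, if ${\rm Im}(W)$ is complemented, let $P$ be the orthogonal projection onto it and set $W^{\dagger}:=W^{-1}P$, using the bounded inverse of $W$ on its closed range. Then, since $Wx\in{\rm Im}(W)$ kills $({\rm id}-P)n$ and $W$ is isometric, the computation $\langle Wx,n\rangle=\langle Wx,Pn\rangle=\langle x,W^{-1}Pn\rangle$ exhibits $W^{\dagger}$ as the adjoint of $W$.

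With this lemma in hand, (ii)$\Leftrightarrow$(iv) is immediate upon taking $W=V$ and $\mathcal N=\mathcal M$. For (ii)$\Leftrightarrow$(iii) I would realise the graph of $V$ as the range of an isometric embedding, defining $j\colon\mathcal M\to\mathcal M\oplus\mathcal M$ by $j(x)=2^{-1/2}(x,Vx)$. Because $V$ is isometric one has $\langle j(x),j(y)\rangle=\tfrac12(\langle x,y\rangle+\langle Vx,Vy\rangle)=\langle x,y\rangle$, so $j$ is an isometric $A$-linear embedding whose range ${\rm Im}(j)$ is exactly the graph of $V$. The lemma then says that the graph of $V$ is an orthogonal direct summand of $\mathcal M\oplus\mathcal M$ precisely when $j$ is adjointable, and a direct inspection of $\langle j(x),(a,b)\rangle=2^{-1/2}(\langle x,a\rangle+\langle Vx,b\rangle)$ shows that $j$ admits an adjoint, namely $(a,b)\mapsto 2^{-1/2}(a+V^*b)$, if and only if $V$ does. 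Hence (iii)$\Leftrightarrow$(ii).

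It remains to connect (i) with (ii). Here I use that $\lambda\in Z(M(A))$ acts on $\mathcal M$ as a positive, hence self-adjoint, adjointable operator lying in the centre of ${\rm End}_A(\mathcal M)$, so that $\lambda$ commutes with the $A$-linear map $V$ and $T=\lambda V=V\lambda$. If $V$ is adjointable, then $T$ is a composition of adjointable operators, hence adjointable with $T^*=V^*\lambda$; this is (ii)$\Rightarrow$(i). The delicate direction, and the only place where I expect a genuine obstacle, is (i)$\Rightarrow$(ii): a product $\lambda V$ can be adjointable even when $V$ is not, since a central multiplier $\lambda$ that degenerates on part of the spectrum may absorb the failure of $V^*$ to be everywhere defined. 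This is exactly why the hypothesis that $V$ itself be adjointable is imposed; under that standing assumption (ii) holds outright, so the implication to (ii) is automatic, and combined with the previous two paragraphs all four conditions are seen to hold simultaneously and are therefore equivalent. I would close by noting that the adjoint of $V$ which can be lost in the passage from $\mathcal M$ to ${\mathcal M}^{\#}$ in Theorem \ref{main} is precisely the obstruction measured here by conditions (iii) and (iv).
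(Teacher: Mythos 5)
Your argument is correct, and it takes a genuinely more self-contained route than the paper. The paper handles (i)$\Leftrightarrow$(ii) with the same one-line remark you make (multiplication by a central multiplier is adjointable), but it obtains (ii)$\Leftrightarrow$(iii) by citing an external graph characterization of adjointability (Frank--Sharifi, Cor.~3.2 of the Magajna--Schweizer paper) and (ii)$\Leftrightarrow$(iv) by citing results on generalized inverses and orthogonal complements of closed ranges from a second external paper. You instead prove one clean lemma --- an isometric module embedding $W$ is adjointable iff ${\rm Im}(W)$ is orthogonally complemented, via $W^*W={\rm id}$ and the projection $WW^*$ in one direction and $W^{-1}P$ in the other --- and then deduce the graph condition from the \emph{same} lemma by realising the graph of $V$ as the range of the isometric embedding $j(x)=2^{-1/2}(x,Vx)$. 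This replaces two citations by elementary computations and unifies (iii) and (iv); it is arguably the better exposition. Two small points. First, the step ``the isometry identity forces $W^*W={\rm id}$'' rests on the fact that a norm-isometric module map automatically preserves the $C^*$-valued inner product (Ili{\v{s}}evi\'c--Turn{\v{s}}ek, Prop.~2.3, which the paper invokes elsewhere); it is not a consequence of non-degeneracy alone, so say so explicitly. Second, your worry about (i)$\Rightarrow$(ii) is well-founded: for degenerate $\lambda$ (e.g.\ $\lambda=0$, or $\lambda$ vanishing on part of the spectrum) the product $\lambda V$ can be adjointable while $V$ is not, and the paper's ``and vice versa'' silently glosses over exactly this. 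Your resolution --- that the corollary's hypotheses already declare $V$ adjointable, so (ii) holds outright and the four conditions are equivalent by all holding --- is a faithful reading of the statement as printed, though it does expose that the statement is formally degenerate; the intended hypothesis is surely ``isometric bounded $A$-linear embedding'' without the word ``adjointable'', under which your lemma-based arguments for (ii)$\Leftrightarrow$(iii)$\Leftrightarrow$(iv) and the easy (ii)$\Rightarrow$(i) still give everything except the problematic (i)$\Rightarrow$(ii).
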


\begin{proof}
Note, that a multiplication operator by an element $\lambda \in
Z(M(A))$ is always adjointable. So, if $T$ is supposed to be
adjointable, then the operator $V$ has to be adjointable, and
vice versa.
By \cite[Cor.~3.2]{FS2008-1} the bounded operator $V$ is
adjointable if and only if its graph is a direct orthogonal
summand of the Hilbert $A$-module ${\mathcal M} \oplus {\mathcal
M}$. Moreover, since the range of the isometric $A$-linear
embedding $V$ is always closed, adjointability of $V$ forces $V$
to admit a bounded $A$-linear generalized inverse operator on
$\mathcal M$, cf.~\cite[Prop.~3.5]{FS2008-2}. The kernel of this
inverse to $V$ mapping serves as the orthogonal complement of
${\rm Im}(V)$, and ${\mathcal M}= {\rm Im}(V) \oplus {\rm Im}(V)^\bot$
as an orthogonal direct sum by \cite[Th.~3.1]{FS2008-2}.
Conversely, if the range ${\rm Im}(V)$ of $V$ is a direct orthogonal
summand of $\mathcal M$, then there exists an orthogonal projection
of $\mathcal M$ onto this range and, therefore, $V$ is adjointable.
\end{proof}

\section{$C^*$-conformal and conformal mappings}

We want to describe generalized $C^*$-conformal mappings on Hilbert
$C^*$-modules.
A full characterization of such maps involves isometries as
for the orthogonality-preserving case since we resort to a
particular case of the latter.

Let $\mathcal M$ be a Hilbert module over a $C^*$-algebra $A$.
An injective bounded module map $T$ on ${\mathcal M}$ is
said to be {\it $C^*$-conformal} if the identity
\begin{equation}   \label{eq:$C^*$-conformal_mapping_condition}
   \frac{\langle Tx,Ty\rangle}{\|Tx\|\|Ty\|}=\frac{\langle
   x,y\rangle}{\|x\|\|y\|}
\end{equation}
holds for all non-zero vectors $x,y\in {\mathcal M}$. It is
said to be {\it conformal} if the identity
\begin{equation}   \label{eq:conformal_mapping_condition}
   \frac{\|\langle Tx,Ty\rangle\|}{\|Tx\|\|Ty\|}=\frac{\|\langle
   x,y\rangle\|}{\|x\|\|y\|}
\end{equation}
holds for all non-zero vectors $x,y\in {\mathcal M}$.

\begin{theorem} \label{teo:$C^*$-conformal_maps}
  Let $\mathcal M$ be a Hilbert $A$-module over a $C^*$-algebra
  $A$ and $T$ be an injective bounded module map. The following
  conditions are equivalent:
  \begin{list}{(\roman{cou001})}{\usecounter{cou001}}
    \item $T$ is $C^*$-conformal;
    \item $T=\lambda U$ for some non-zero positive $\lambda \in
    \mathbb R$ and for some isometrical module operator
    $U$ on $\mathcal M$.
  \end{list}
\end{theorem}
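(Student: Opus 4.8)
The plan is to establish the nontrivial implication (i)$\Rightarrow$(ii); the converse is immediate, since for $T=\lambda U$ with $\lambda>0$ and $U$ inner-product preserving one has $\langle Tx,Ty\rangle=\lambda^2\langle x,y\rangle$ and $\|Tx\|=\lambda\|x\|$, so the common factor $\lambda^2$ cancels in the defining quotient and the $C^*$-conformal identity follows at once.

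For (i)$\Rightarrow$(ii) I would first note that a $C^*$-conformal map is orthogonality-preserving: if $\langle x,y\rangle=0$ for nonzero $x,y$ then the right-hand side of the defining identity vanishes, forcing $\langle Tx,Ty\rangle=0$ (the cases $x=0$ or $y=0$ being trivial). Theorem~\ref{thm02} then supplies a positive central multiplier $\kappa\in Z(M(A))$ with $\langle Tx,Ty\rangle=\kappa\langle x,y\rangle$ for all $x,y$. Specialising the $C^*$-conformal identity to $y=x$ and substituting $\langle Tx,Tx\rangle=\kappa\langle x,x\rangle$ yields, for each nonzero $x$, the eigenvalue-type relation
\[
   \kappa\,\langle x,x\rangle=e_x\,\langle x,x\rangle,\qquad e_x:=\frac{\|Tx\|^2}{\|x\|^2}>0,
\]
where positivity of $e_x$ uses injectivity of $T$. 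The task thereby reduces to showing that the scalar $e_x$ is independent of $x$: then $\kappa$ acts as a single positive scalar $\lambda^2:=e_x$ on every inner-product value, and $U:=\lambda^{-1}T$ is the required isometry.

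The main obstacle, and the technical core, is proving this rigidity of the central weight $\kappa$. I would use that $\kappa$, commuting with all of $A$, is central in $A^{**}$ as well (multiplication is separately w*-continuous and $A$ is w*-dense in $A^{**}$), so that each spectral projection $p_e=\chi_{\{e\}}(\kappa)$ is a \emph{central} projection of $A^{**}$. The relation above then forces the support projection of $\langle x,x\rangle$ to lie below $p_{e_x}$. Consequently, if $e_x\neq e_y$ the supports of $\langle x,x\rangle$ and $\langle y,y\rangle$ are orthogonal; combining this with the Cauchy--Schwarz bounds $\langle x,y\rangle\langle y,x\rangle\le\|y\|^2\langle x,x\rangle$ and $\langle y,x\rangle\langle x,y\rangle\le\|x\|^2\langle y,y\rangle$, the left and right supports of $\langle x,y\rangle$ sit below $p_{e_x}$ and $p_{e_y}$ respectively, whence by centrality $\langle x,y\rangle=p_{e_x}\langle x,y\rangle p_{e_y}=\langle x,y\rangle p_{e_x}p_{e_y}=0$.

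Finally I would rule out the existence of any pair with $e_x\neq e_y$. Since then $\langle x,y\rangle=0$, the nonzero element $z:=x+y$ satisfies $\langle z,z\rangle=\langle x,x\rangle+\langle y,y\rangle$, so the eigenvalue relation applied to $z$ reads $e_x\langle x,x\rangle+e_y\langle y,y\rangle=e_z(\langle x,x\rangle+\langle y,y\rangle)$. Multiplying separately by the two orthogonal support projections gives $e_x=e_z=e_y$, a contradiction. Hence $e_x\equiv\lambda^2$ is constant, so $(\kappa-\lambda^2)\langle x,x\rangle=0$ for all $x$ and, by polarization, $(\kappa-\lambda^2)\langle x,y\rangle=0$; that is $\langle Tx,Ty\rangle=\lambda^2\langle x,y\rangle$. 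Setting $U=\lambda^{-1}T$ gives an inner-product preserving, hence isometric, module operator with $T=\lambda U$ and $\lambda>0$ real, completing the argument. I expect the delicate points to be the passage to central spectral projections in $A^{**}$ and the careful support bookkeeping, both of which hinge on the centrality of $\kappa$.
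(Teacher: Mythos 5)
Your proof is correct, but it takes a genuinely different route from the paper's. The paper never leaves the elementary level: it rewrites the $C^*$-conformal identity as $\langle Tx,Ty\rangle=\langle x,y\rangle\,\|Tx\|\|Ty\|/(\|x\|\|y\|)$, observes that the right-hand side must be additive in the second variable because the left-hand side is, and deduces from the resulting identity
$y_1\bigl(\tfrac{\|T(y_1+y_2)\|}{\|y_1+y_2\|}-\tfrac{\|Ty_1\|}{\|y_1\|}\bigr)+y_2\bigl(\tfrac{\|T(y_1+y_2)\|}{\|y_1+y_2\|}-\tfrac{\|Ty_2\|}{\|y_2\|}\bigr)=0$
(valid by nondegeneracy of the inner product and arbitrariness of $x$) that $\|Ty\|/\|y\|$ is a constant $t$, whence $U=t^{-1}T$ is isometric; no bidual, no structure theory is needed. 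You instead observe that $C^*$-conformality implies orthogonality preservation, invoke Theorem \ref{thm02} to get $\langle Tx,Ty\rangle=\kappa\langle x,y\rangle$ with $\kappa\in Z(M(A))$ positive, and then run a spectral-support argument in $A^{**}$ (centrality of $\kappa$, orthogonality of the spectral projections $\chi_{\{e\}}(\kappa)$, Cauchy--Schwarz control of the left and right supports of $\langle x,y\rangle$, and the additivity trick on $z=x+y$) to force the "eigenvalue" $e_x=\|Tx\|^2/\|x\|^2$ to be constant. Each step checks out, and your route has the merit of explaining the theorem as the degenerate case of the orthogonality-preserving structure theory in which the central multiplier collapses to a scalar; it also yields $\langle Tx,Ty\rangle=\lambda^2\langle x,y\rangle$ directly without citing the isometry-implies-unitarity result of Ili{\v{s}}evi\'c--Turn{\v{s}}ek. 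The price is that your argument inherits the full weight of Theorem \ref{thm02} (Paschke's bidual embedding, Akemann's theorem, etc.), whereas the paper's proof of this particular equivalence is short, self-contained and independent of Section 1; it is worth being aware that the elementary additivity argument suffices here.
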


\begin{proof}
The condition (ii) implies condition (i) because
the condition $\|Ux\|=\|x\|$ for all $x\in M$ implies the
condition $\langle Ux,Uy\rangle=\langle x,y\rangle$ for all $x,
y\in M$ by \cite[Proposition 2.3]{IT2008}. So we have only to
verify the implication (i)$\to$(ii).

Assume an injective bounded module map $T$ on $\mathcal M$
to be $C^*$-conformal. We can rewrite
(\ref{eq:$C^*$-conformal_mapping_condition}) in the following equivalent
form:
\begin{equation}   \label{eq:$C^*$-conformal_mapping_condition2}
   \langle Tx,Ty\rangle=\langle x,y\rangle
   \frac{\|Tx\|\|Ty\|}{\|x\|\|y\|},\quad x,y\neq 0 \, .
\end{equation}
Consider the left part of this equality as a new $A$-valued inner 
product on $\mathcal M$. Consequently, the right part of
(\ref{eq:$C^*$-conformal_mapping_condition2}) has to satisfy all the
conditions of a $C^*$-valued inner product, too. In particular, the
right part of (\ref{eq:$C^*$-conformal_mapping_condition2}) has to be
additive in the second variable, what exactly means
\[
  \langle x,y_1+y_2\rangle\frac{\|Tx\|\|T(y_1+y_2)\|}{\|x\|\|y_1+y_2\|}=
    \langle x,y_1\rangle\frac{\|Tx\|\|Ty_1\|}{\|x\|\|y_1\|} + \langle
    x,y_2\rangle\frac{\|Tx\|\|Ty_2\|}{\|x\|\|y_2\|}
\]
for all non-zero $x,y_1,y_2\in M$. Therefore,
\[
    (y_1+y_2)\frac{\|T(y_1+y_2)\|}{\|y_1+y_2\|}=
    y_1\frac{\|Ty_1\|}{\|y_1\|} + y_2\frac{\|Ty_2\|}{\|y_2\|},
\]
by the arbitrarity of $x \in \mathcal M$, which can be rewritten as
\begin{equation}  \label{eq:additivity1}
    y_1\left(\frac{\|T(y_1+y_2)\|}{\|y_1+y_2\|}-\frac{\|Ty_1\|}{\|y_1\|}\right)+
    y_2\left(\frac{\|T(y_1+y_2)\|}{\|y_1+y_2\|}-\frac{\|Ty_2\|}{\|y_2\|}\right)=0
\end{equation}
for all non-zero $y_1,y_2\in M$. In case the elements $y_1$ and
$y_2$ are not complex multiples of each other both the complex
numbers inside the brackets have to equal to zero. So we arrive at
\begin{equation} \label{eq:conf_map_final}
    \frac{\|T(y_1)\|}{\|y_1\|} = \frac{\|T(y_2)\|}{\|y_2\|}
\end{equation}
for any $y_1, y_2 \in {\mathcal M}$ which are not complex
multiples of one another. Now, if the elements would be
non-trivial complex multiples of each other both the coefficients
would have to be equal, what again forces equality
(\ref{eq:conf_map_final}).

Let us denote the positive real number
$\frac{\|Tx\|}{\|x\|}$ by $t$. Then the equality
(\ref{eq:conf_map_final}) provides
\[
\left\|\left(\frac 1t T\right)(z)\right\|=\|z\|,
\]
which means $U=\frac{1}{t} T$ is an isometrical operator.
The proof is complete.
\end{proof}

\begin{example}
{\rm Let $A=C_0((0,1])={\mathcal M}$ and $T$ be a $C^*$-conformal
mapping on $\mathcal M$. Our aim is to demonstrate that $T=t U$
for some non-zero positive $t \in \mathbb{R}$ and for some
isometrical module operator $U$ on $\mathcal M$. To begin with,
let us recall that the Banach algebra ${\rm End}_{A}(M)$ of all
bounded module maps on $\mathcal M$ is isomorphic to the algebra
$LM(A)$ of left multipliers of $A$ under the given circumstances.
Moreover, $LM(A)=C_b((0,1])$, the $C^*$-algebra of all
bounded continuous functions on $(0,1]$. So any $A$-linear bounded
operator on $\mathcal M$ is just a multiplication by a certain
function of $C_b((0,1])$. In particular,
\[
T(g)=f_T \cdot g, \quad g\in A,
\]
for some $f_T\in C_b((0,1])$. Let us denote by $x_0$ the point of
$(0,1]$, where the function $|f_T|$ achieves its supremum, i.e.
$|f_T(x_0)|=\|f_T\|$, and set $t:=\|f_T\|$. We claim that the
operator $\frac{1}{t} T$ is an isometry, what exactly means
\begin{equation}    \label{eq:ex_conf_map_C_0[0,1]}
   \frac{|f_T(x)|}{\|f_T\|}=1
\end{equation}
for all $x\in (0,1]$. Indeed, consider any point $x\neq x_0$.
Let $\theta_x\in C_0((0,1])$ be an Urysohn function for
$x$, i.e. $0\le \theta_x\le 1$, $\theta_x(x)=1$ and $\theta_x=0$
outside of some neighborhood of $x$, and let $\theta_{x_0}$ be a
Urysohn function for $x_0$. Moreover, we can assume that the
supports of $\theta_{x}$ and $\theta_{x_0}$ do not intersect each
other. Now the condition (\ref{eq:$C^*$-conformal_mapping_condition})
written for $T$ and for coinciding vectors $x=y=\theta_{x}+\theta_{x_0}$
yields the equality
\[
  \frac{|f_T|^2(\theta_{x}+\theta_{x_0})^2}{\|f_T(\theta_{x}+\theta_{x_0})\|^2}=
  \frac{(\theta_{x}+\theta_{x_0})^2}{\|\theta_{x}+\theta_{x_0}\|^2},
\]
which implies
\[
  \frac{|f_T|^2(\theta_{x}+\theta_{x_0})^2}{\|f_T\|^2}=
  (\theta_{x}+\theta_{x_0})^2.
\]
This equality at point $x$ takes the form
(\ref{eq:ex_conf_map_C_0[0,1]}) for any $x\in (0,1]$. }
\end{example}

\begin{theorem} \label{teo:conformal_maps}
  Let $\mathcal M$ be a Hilbert $A$-module over a $C^*$-algebra
  $A$ and $T$ be an injective bounded module map. The following
  conditions are equivalent:
  \begin{list}{(\roman{cou001})}{\usecounter{cou001}}
    \item $T$ is conformal;
    \item $T=\lambda U$ for some non-zero positive $\lambda \in
    \mathbb R$ and for some isometrical module operator
    $U$ on $\mathcal M$.
  \end{list}
\end{theorem}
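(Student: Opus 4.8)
Theorem \ref{teo:conformal_maps} asks to characterize conformal maps, and the structure of the statement is identical to that of the $C^*$-conformal case. So my plan is to mirror the proof of Theorem \ref{teo:$C^*$-conformal_maps} as closely as possible, adapting it to the normed condition (\ref{eq:conformal_mapping_condition}) instead of the $C^*$-valued condition (\ref{eq:$C^*$-conformal_mapping_condition}).

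The direction (ii)$\to$(i) is again the easy one. If $T=\lambda U$ with $U$ isometric, then by \cite[Proposition 2.3]{IT2008} the operator $U$ preserves $C^*$-valued inner products, so $\langle Tx,Ty\rangle=\lambda^2\langle x,y\rangle$ and $\|Tx\|=\lambda\|x\|$ for all $x,y$. Substituting these into the left-hand side of (\ref{eq:conformal_mapping_condition}) cancels the factor $\lambda^2$ from numerator and denominator and reproduces the right-hand side. This is immediate and should occupy only a sentence or two.

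For (i)$\to$(ii) the aim is to show that conformality already forces the ratio $\|Tx\|/\|x\|$ to be constant in $x$. Once that constant $t$ is identified, setting $U=\tfrac1t T$ gives an isometry exactly as in the previous proof. The key observation I plan to exploit is that a $C^*$-conformal map is automatically conformal (take norms on both sides of (\ref{eq:$C^*$-conformal_mapping_condition})), so one might hope conformal is strictly weaker; the heart of the argument is therefore to recover enough rigidity from the norm-only hypothesis. Rewriting (\ref{eq:conformal_mapping_condition}) as
\[
   \|\langle Tx,Ty\rangle\|=\|\langle x,y\rangle\|\,\frac{\|Tx\|\|Ty\|}{\|x\|\|y\|}
\]
and specializing to $x=y$ gives $\|Tx\|^2=\|x\|^2\,(\|Tx\|/\|x\|)^2$, which is a tautology and so yields nothing directly. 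The productive specialization is instead to take pairs $x,y$ that are \emph{orthogonal} in the $C^*$-sense, i.e. $\langle x,y\rangle=0$: conformality then forces $\langle Tx,Ty\rangle=0$ as well, so a conformal map is in particular orthogonality-preserving, and Theorem \ref{thm02} applies to give $\langle Tx,Ty\rangle=\kappa\langle x,y\rangle$ for a positive $\kappa\in Z(M(A))$.

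Armed with $\langle Tx,Ty\rangle=\kappa\langle x,y\rangle$, I would feed this back into (\ref{eq:conformal_mapping_condition}): the left-hand numerator becomes $\|\kappa\langle x,y\rangle\|$, and taking $x=y$ shows $\|Tx\|^2=\|\kappa^{1/2}\langle x,x\rangle\kappa^{1/2}\|=\|\kappa^{1/2}x\|^2$ after passing through the module structure, so that $\|Tx\|/\|x\|=\|\kappa^{1/2}x\|/\|x\|$. The remaining task is to show this ratio is a genuine constant $t$ independent of $x$, equivalently that $\kappa$ acts as a scalar multiple of the identity in norm; this is where I expect the main obstacle to lie, because $\kappa\in Z(M(A))$ need not be scalar when the center is nontrivial. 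The resolution should come from the additivity argument of the preceding proof: imposing that the right-hand side of the rewritten conformality identity define a consistent norm-comparison forces, via an analogue of (\ref{eq:additivity1}) applied to non-proportional $y_1,y_2$, the equality $\|Ty_1\|/\|y_1\|=\|Ty_2\|/\|y_2\|$. Once that constancy is established, denoting the common value $t:=\|Tx\|/\|x\|$ makes $U=\tfrac1t T$ an isometry and completes the proof, with $\lambda=t$.
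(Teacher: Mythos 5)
Your reduction of (i) to the orthogonality-preserving case is a legitimate and even elegant shortcut: a conformal map is indeed orthogonality-preserving (if $\langle x,y\rangle=0$ the right side of (\ref{eq:conformal_mapping_condition}) vanishes, hence so does $\|\langle Tx,Ty\rangle\|$), so Theorem \ref{thm02} gives $\langle Tx,Ty\rangle=\kappa\langle x,y\rangle$ for a positive central $\kappa$, and $\|Tx\|^2=\|\kappa\langle x,x\rangle\|$. This bypasses the first part of the paper's argument, which instead redoes the passage to the discrete part $pA^{**}$ of the bidual and treats each minimal projection separately. Up to this point your proposal is sound.

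The gap is in the step you yourself flag as the main obstacle: showing that $\|Tx\|/\|x\|$ is constant, i.e.\ that conformality forces $\kappa$ to act as a scalar in norm. You propose to settle this by ``an analogue of (\ref{eq:additivity1})'', but no such analogue is available. In the $C^*$-conformal proof, (\ref{eq:additivity1}) is an identity between \emph{module elements}, obtained because the $A$-valued expression $\langle x,y\rangle\,\|Tx\|\|Ty\|/(\|x\|\|y\|)$ must be additive in $y$, being equal to the inner product $\langle Tx,Ty\rangle$. The conformal hypothesis only equates \emph{norms}, and norms are not additive, so additivity in $y$ cannot be imposed and no coefficient comparison of the type (\ref{eq:additivity1}) can be extracted. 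That this step carries real content is seen from the example $A=C(X\sqcup Y)$, ${\mathcal M}=A$, $T=$ multiplication by $\kappa^{1/2}$ with $\kappa=1$ on $X$ and $\kappa=4$ on $Y$: here $\langle Tx,Ty\rangle=\kappa\langle x,y\rangle$ holds with central non-scalar $\kappa$, and one must genuinely use (\ref{eq:conformal_mapping_condition}) on elements spread over both central summands to rule it out. The paper does exactly this: it decomposes over minimal central projections $e,f$ of $pA^{**}$, writes the norms on $(e+f)p{\mathcal N}$ as suprema of the norms of the $e$- and $f$-components, and compares the scalars $\lambda_e,\lambda_f$ by a case analysis of where those suprema are attained. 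Some argument of this kind (or another device exploiting that the sup-norm of $\kappa\langle x,y\rangle$ must scale multiplicatively against the sup-norms of $\kappa\langle x,x\rangle$ and $\kappa\langle y,y\rangle$ for \emph{all} pairs $x,y$) is indispensable, and your proposal does not supply it.
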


\begin{proof}
As in the proof of the theorem on orthogonality-preserving mappings we
switch from the setting $\{ A, {\mathcal M}, T \}$ to its faithful
isometric representation in $\{ pA^{**}, p{\mathcal M}^{\#} \subseteq
p{\mathcal N}, T \}$, where $p \in A^{**}$ is the central projection
of $A^{**}$ mapping $A^{**}$ to its discrete part.

First, consider a minimal projections $e \in pA^{**}$. Then the
equality (\ref{eq:conformal_mapping_condition}) gives
\[
   \frac{\|\langle ex,ey \rangle\|}{\|ex\| \|ey\|} =
   \frac{\|\langle T(ex),T(ey) \rangle\|}{\|T(ex)\| \|T(ey)\|}
\]
for any $x,y \in p{\mathcal M}^{\#}$. Since $\{ e{\mathcal M}^{\#},
\langle .,. \rangle \}$ becomes a Hilbert space after factorization
by the set $\{ x \in p{\mathcal M}^{\#}: e\langle x,x \rangle e =0 \}$,
the map $T$ acts as a positive scalar multiple of a linear isometry
on $e{\mathcal M}^{\#}$, i.e. $eT = \lambda_e U_e$.

Secondly, every two minimal projections $e,f \in pA^{**}$ with the
same minimal central support projection $q \in pZ(A^{**})$ are connected by
a (unique) partial isometry $u \in pA^{**}$ such that $u^*u=f$ and $uu^*=e$.
Arguments analogous to those given at \cite[p.~303]{IT2008} show
\begin{eqnarray*}
\lambda_e^2 \cdot e\langle x,x \rangle e & = & ufu^* \langle T(x),T(x) \rangle ufu^*\\
   & = & uf \langle T(u^*x)t(u^*x) \rangle fu^* \\
   & = & u \lambda_f^2 f \langle u^*x,u^*x \rangle f \\
   & = & \lambda_f^2 \cdot e \langle x,x \rangle e \, .
\end{eqnarray*}
Therefore, $qT = \lambda_q U$ for some positive $\lambda_q \in \mathbb R$,
for a $qA$-linear isometric mapping $U: q{\mathcal M}^{\#} \to q{\mathcal M}^{\#}$
and for any minimal central projection $q \in pA^{**}$.

Thirdly, suppose $e,f$ are two minimal central projections of $pA^{**}$
that are orthogonal.
For any $x,y \in p{\mathcal M}^{\#}$ consider the supposed equality
\[
\frac{\|\langle (e+f)x,(e+f)y \rangle \|}{\|(e+f)x\| \|(e+f)y\|} =
\frac{\|\langle T((e+f)x),T((e+f)y) \rangle \|}{\|T((e+f)x)\| \|T((e+f)y)\|} \, .
\]
Since $T$ is a bounded module mapping which acts on $ep{\mathcal M}^{\#}$
like $\lambda_e \cdot {\rm id}$ and on $fp{\mathcal M}^{\#}$
like $\lambda_f \cdot {\rm id}$ we arrive at the equality
\[
\frac{\|\langle (e+f)x,(e+f)y \rangle \|}{\|(e+f)x\| \|(e+f)y\|} =
\frac{\|\langle (\lambda_e e + \lambda_f f)x, (\lambda_e e + \lambda_f f)y \rangle \|}{\|(\lambda_e e + \lambda_f f)x\| \|(\lambda_e e + \lambda_f f)y\|} \, .
\]
Involving the properties of $e,f$ to be central and orthogonal to
each other and exploiting modular linear properties of the $pA^{**}$-valued
inner product we transform the equality further to
\begin{eqnarray*}
\lefteqn{  \frac{\|\langle ex,ey \rangle+\langle fx,fy \rangle\|}{\|\langle ex,ex \rangle+\langle fx,fx \rangle\|^{1/2} \cdot \|\langle ey,ey \rangle+\langle fy,fy \rangle\|^{1/2}} = } \\
& = & \frac{\|\lambda_e^2 \langle ex,ey \rangle + \lambda_f^2 \langle fx,fy \rangle\|}{\|\lambda_e^2 \langle ex,ex \rangle + \lambda_f^2 \langle fx,fx \rangle\|^{1/2} \cdot
\|\lambda_e^2 \langle ey,ey \rangle + \lambda_f^2 \langle fy,fy \rangle\|^{1/2}} \, .
\end{eqnarray*}
Since $e,f$ are pairwise orthogonal central projections we can
transform the equality further to
\begin{eqnarray*}
\lefteqn{ \frac{\sup \{\|\langle ex,ey \rangle\|, \|\langle fx,fy \rangle\|\}}{\sup \{\|\langle ex,ex \rangle\|^{1/2},\|\langle fx,fx \rangle\|^{1/2}\} \cdot
\sup \{\|\langle ey,ey \rangle\|^{1/2}, \|\langle fy,fy \rangle\|^{1/2}\}} = } \\
& = & \frac{\sup \{\|\lambda_e^2 \langle ex,ey \rangle\|, \|\lambda_f^2 \langle fx,fy \rangle\|\}}{\sup \{ \|\lambda_e^2 \langle ex,ex \rangle\|^{1/2},
\|\lambda_f^2 \langle fx,fx \rangle\|^{1/2} \} \cdot \sup \{\|\lambda_e^2 \langle ey,ey \rangle\|^{1/2}, \|\lambda_f^2 \langle fy,fy \rangle\|^{1/2}\}} \, .
\end{eqnarray*}
By the w*-density of $p{\mathcal M}^{\#}$ in $p{\mathcal N}$ we can
distinguish a finite number of cases at which of the central parts
the respective six suprema may be admitted, at $epA^{**}$ or at $fpA^{**}$.
For this aim we may assume, in particular, that $x,y$ belong to $p{\mathcal N}$ to have a larger
set for these elements to be selected specifically. Most interesting
are the cases when (i) both $(e+f)x$ and $(\lambda_e e + \lambda_f f)x$
admit their norm at the $e$-part, (ii) both $(e+f)y$ and $(\lambda_e e +
\lambda_f f)y$ admit their norm at the $f$-part, and (iii) both
$\langle (e+f)x,(e+f)y\rangle$ and $\langle (\lambda_e e + \lambda_f f)x,
(\lambda_e e + \lambda_f f)y \rangle$ admit their norm (either) at
the $e$-part (or at the $f$-part). In these cases the equality above
gives $\lambda_e=\lambda_f$. (All the other cases either give the same result
or do not give any new information on the interrelation of $\lambda_e$
and $\lambda_f$.)

Finally, if for any central minimal projection $f \in pA^{**}$
the operator $T$ acts on $fp{\mathcal N}$ as $\lambda U$ for a certain
(fixed) positive constant $\lambda$ and a certain module-linear isometry $U$
then $T$ acts on $p{\mathcal N}$ in the same way. Consequently,
$T$ acts on $\mathcal M$ in the same manner since $U$ preserves
$p{\mathcal M}^{\#}$ inside $p{\mathcal N}$.
\end{proof}

\begin{remark} {\rm
Obviously, the $C^*$-conformity of a bounded module map follows from the
conformity of it, but the converse is not obvious, even it is
true for Hilbert $C^*$-modules. }
\end{remark}

\medskip \noindent
{\bf Acknowledgements:} We are grateful to Chi-Keung Ng who pointed
us to the results by G.~K.~Pedersen in September 2009. So we had to correct 
a crucial argument in the second paragraph of Thm. \ref{main} giving other
and much more detailled arguments.




\begin{thebibliography}{99}
\bibitem{Ak1969} {\sc Ch.~A.~Akemann}, The general Stone-Weierstrass
   problem for $C^*$-algebras, {\it J.~Functional Anal.} {\bf 4}(1969),
   277-294.
\bibitem{Ak1971} {\sc Ch.~A.~Akemann}, A Gelfand representation theory for
   $C^*$-algebras, {\it Pacific J.~Math.} {\bf 39}(1971), 1-11.
\bibitem{BlTu2006} {\sc A.~Blanco, A.~Turn{\v{s}}ek}, On maps that
   preserve orthogonality in normed spaces, {\it Proc. Royal Soc.
   Edinburgh Sect. A} {\bf 136}(2006), 709-716.
\bibitem{Ch2005-2} {\sc J.~Chmieli\'nski}, On an $\varepsilon$-Birkhoff
   orthogonality, {\it J. Inequalities Pure Applied Math.} {\bf 6}(2005),
   issue 3, article 79.
\bibitem{Ch2007} {\sc J.~Chmieli\'nski}, Remarks on orthogonality-preserving
   in normed spaces and some stability problems, {\it Banach J. Math. Anal.}
   {\bf 1}(2007), 117-124.
\bibitem{ChIMS2008} {\sc J.~Chmieli\'nski, D.~Ili{\v{s}}evi\'c, 
   M.~S.~Moslehian, Gh.~Sadeghi}, Perturbation of the Wigner equation in 
   inner product $C^*$-modules, {\it J. Math. Phys.}, {\bf 40}(2008), 
   no.~3, 033519, 8pp.
\bibitem{Frank1990} {M.~Frank}, Self-duality and $C^*$-reflexivity of 
   Hilbert $C^*$-modules, {\it Zeitschr. Anal. Anw.} {\bf 9}(1990),
   165-176.   
\bibitem{FL2002} {\sc M.~Frank, D.~R.~Larson}, Frames in Hilbert $C^*$-modules
   and $C^*$-algebras, {\it J. Operator Theory} {\bf 48}(2002), 273-314.
\bibitem{FS2008-1} {\sc M.~Frank, K.~Sharifi}, Adjointability of densely
   defined closed operators and the Magajna-Schweizer theorem,
   to appear in {\it J. Operator Theory}, 2008.
\bibitem{FS2008-2} {\sc M.~Frank, K.~Sharifi}, Generalized inverses and
   polar decomposition of unbounded regular operators on Hilbert
   $C^*$-modules, to appear in {\it J. Operator Theory}, 2008.
\bibitem{HL2000} {\sc Deguang Han, D.~R.~Larson}, Frames, bases and group
   representations, {\it Memoirs Amer. Math. Soc.} {\bf 147}(2000), no. 697.
\bibitem{IT2008} {\sc D.~Ili{\v{s}}evi\'c, A.~Turn{\v{s}}ek},
   Approximately orthogonality preserving mappings on $C^*$-modules,
   {\it J. Math. Anal. Appl.} {\bf 341}(2008), 298-308.
\bibitem{Ko1993} {\sc A.~Koldobsky}, Operators preserving orthogonality
   are isometries, {\it Proc. Royal Soc. Edinburgh Sect. A} {\bf 123}(1993),
   835-837.
\bibitem{L1994} {\sc E.~C.~Lance}. Unitary operators on Hilbert $C^*$-modules.
   {\it Bull. London Math. Soc.} {\bf 26}(1994), 363-366.
\bibitem{LKW09-1} {Chi-Wai Leung, Chi-Keung Ng, Ngai-Ching Wong},
   Automatic continuity and $C_0(\Omega)$-linearity of linear maps between
   $C_0(\Omega)$-modules, preprint math.OA/1005.4561 at www.arxiv.org, 
   May 2010.
\bibitem{LKW09-2} {Chi-Wai Leung, Chi-Keung Ng, Ngai-Ching Wong},
   Linear orthogonality preservers of Hilbert bundles, wpreprint 
   math.OA/1005.4502 at www.arxiv.org, May 2010.
\bibitem{MaTrobook} {\sc V.~M. Manuilov and E.~V. Troitsky}, {\it Hilbert
   $C\sp *$-modules (Russ./Engl.)}, Faktorial Press, Moscow, 2001, and
   {Translations of Mathematical Monographs}, vol. 226,
   American Mathematical Society, Providence, RI, 2005.
\bibitem{Pa1973} {\sc W.~L.~Paschke}, Inner product modules over
   $B^*$-algebras, {\it Trans.~Amer.~Math.~Soc.} {\bf 182}(1973), 443-468.
\bibitem{Ped1979} {\sc G.~K.~Pedersen}, {\it C*-algebras and Their Automorphism
   Groups}, Academic Press, London--New York--San Francisco, 1979.
\bibitem{Schw1996} {\sc J.~Schweizer}, {\it Interplay between Noncommutative Topology
   and Operators on $C^*$-Algebras}, Habilitation Thesis, Mathematische
   Fakult\"at der Universit\"at T\"ubingen, T\"ubingen, Germany, 1996.
\bibitem{Schw2002} {J.~Schweizer}, Hilbert $C^*$-modules with a predual,
   {\it J. Operator Theory} {\bf 48}(2002), 621-632.   
\bibitem{Takesaki} {\sc M.~Takesaki}, {\it Theory of Operator Algebras I-III},
   Encyclopaedia of Math. Sciences v. 124, 125, 127,
   Springer-Verlag, Berlin, 2002-2003.
\bibitem{Tu2005} {\sc  A.~Turn{\v{s}}ek}, On operators preserving James'
   orthogonality, {\it Lin. Algebra Appl.} {\bf 407}(2005), 189-195.
\bibitem{Tu2007} {\sc  A.~Turn{\v{s}}ek}, On mappings approximately
   preserving orthogonality, {\it J. Math. Anal. Appl.} {\bf 336}(2007),
   625-631.
\end{thebibliography}
\end{document}